\date{\today}
\keywords{}
\author{Romain Dujardin}
\thanks{Research  partially supported by ANR project LAMBDA,  ANR-13-BS01-0002 and  a grant from the  Institut Universitaire de France}
\title[Saddle hyperbolicity implies hyperbolicity]{Saddle hyperbolicity implies hyperbolicity for polynomial automorphisms of $\cd$}
\address{Sorbonne Universit\'es, Laboratoire de probabilit\'es, statistique et mod\'elisation, UMR 8001,  
4 place Jussieu, 75005 Paris, France}
\email{romain.dujardin@upmc.fr}
\subjclass[2000]{37F10, 37F15}
\newcommand{\cc}{\mathbb{C}}
\newcommand{\bb}{\mathbb{B}}
\newcommand{\zz}{\mathbb{Z}}
 \newcommand{\cv}{\rightarrow}
\newcommand{\fr}{\partial}
\newcommand{\om}{\Omega}
\newcommand{\set}[1]{\left\{#1\right\}}
\newcommand{\norm}[1]{\left\Vert#1\right\Vert}
\newcommand{\abs}[1]{\left\vert#1\right\vert}
\newcommand{\cd}{{\cc^2}}
\newcommand{\rest}[1]{ \arrowvert_{#1}}
\newcommand{\unsur}[1]{\frac{1}{#1}}
\newcommand{\el}{\mathcal{L}}
\newcommand{\lrpar}[1]{\left(#1\right)}
\newcommand{\la}{\lambda}
\newcommand{\lo}{{\lambda_0}}
\newcommand{\La}{\Lambda}
\newcommand{\loc}{\mathrm{loc}}
\newcommand{\inv}{^{-1}}
\DeclareMathOperator{\supp}{Supp}
\DeclareMathOperator{\Int}{Int}
\DeclareMathOperator{\jac}{Jac}
\newtheorem{prop}{Proposition} [section]
\newtheorem{lem}[prop] {Lemma}
\newtheorem{cor}[prop]{Corollary}
\newtheorem*{mainthm}{Main Theorem}
\newtheorem{step}{Step}
\theoremstyle{remark}
\newtheorem{rmk}[prop]{Remark}
\begin{document}

\begin{abstract}
We prove that for a   polynomial diffeomorphism of $\cd$, uniform hyperbolicity on the set of saddle periodic points implies 
that saddle points are dense in the Julia set. 
In particular  $f$ satisfies Smale's  Axiom A    on $\cd$.
\end{abstract}

 \maketitle
 
 \section{Introduction}

Let $f$ be a polynomial automorphism of $\cd$ with non-trivial dynamics. For such a dynamical system
 there are two natural definitions 
for the Julia set. The first one is in terms of normal families: $J = J^+\cap J^-$ is the set of points  
at which  which neither $(f^n)_{n\geq 0}$
nor $(f^{-n})_{n\geq 0}$ is   locally equicontinuous. The second one is the closure $J^*$  of the set of saddle periodic orbits.
The inclusion $J^* \subset J$ is obvious, and  whether the reverse inclusion  holds is  one of the major open questions in 
higher dimensional holomorphic dynamics.  

Following Bedford and Smillie \cite{bs1}, we say that $f$ is {\em hyperbolic} if $J$ is a hyperbolic set for $f$.  Under this assumption  we have a rather satisfactory understanding of 
the global dynamics of $f$. Indeed it was shown in \cite{bs1} that
 the forward and backward Julia 
sets  $J^+$ and $J^-$ (see \S \ref{subs:vocabulary} below for precise definitions)
are laminated by stable and unstable manifolds, that the Fatou set is the union of finitely many cycles of attracting basins, that 
$f$ satisfies Smale's Axiom A on $\cd$ and finally that $J=J^*$. It was shown by Buzzard and Jenkins \cite{buzzard jenkins} that $f$ 
is structurally stable on $\cd$. There are also tentative   models for a description of the topological dynamics on $J$ (see Ishii 
\cite{ishii} for a survey).

On the other hand  it is sometimes more natural to postulate that $f$ is uniformly hyperbolic on $J^*$. One reason is that this 
information can be read off from the periodic points of $f$. This happens 
 for instance  in the study of the stability/bifurcation dichotomy  for families of polynomial automorphisms
 \cite{tangencies, hyperbolic}. The global consequences of hyperbolicity on $J^*$ are then 
 less easy to analyze, in particular it does not 
 {\em a priori} imply a uniform laminar structure on $J^\pm$. 

The main result of this paper is that these two notions actually coincide.

\begin{mainthm}
 Let $f$ be a  polynomial automorphism of $\cd$ with non-trivial dynamics. If $f$ is  hyperbolic on $J^*$, then 
 $J = J^*$. 
 \end{mainthm}

In particular, if $f$ is hyperbolic on $J^*$, then it is hyperbolic in the sense of \cite{bs1}. 

Recall that the Jacobian $\mathrm{Jac}(f)$ of a polynomial automorphism  is a non-zero constant: it is  dissipative when 
$\abs{\mathrm{Jac}(f)}<1$ and conservative when $\abs{\mathrm{Jac}(f)}=1$. 

This result was first announced in the dissipative case in 
\cite{fornaess}, but the published proof is not correct,
 and it has remained an intriguing  open problem since then. Recently, Guerini and Peters \cite{guerini peters} managed to establish  
the result under the more stringent assumption that $f$ is {\em substantially dissipative}, that is $\mathrm{Jac}(f) < d^{-2}$, 
where $d$ is the dynamical degree (see \S \ref{subs:vocabulary} for this notion). 
Observe that only {\em quasi-hyperbolicity} on $J^*$ 
is assumed in \cite{guerini peters} while our approach seems to  require  the full strength of hyperbolicity. 
 
The proof of the main theorem 
starts with the dissipative case (Section \ref{sec:proof dissipative}). We assume by contradiction 
that $f$ is dissipative,  hyperbolic on $J^*$ and that $J\neq J^*$. In a first stage we   show that for some $p\in J^*$, 
$J^-$  intersects $W^s(p)$ along  a non-trivial relatively open subset, which 
  is an   unexpected property  in the dissipative setting (for instance in the substantially dissipative case, the main point of \cite{guerini peters} is to show that $J^-\cap W^s(p)$ is totally disconnected). 
  The main input here is the ergodic closing lemma that we obtained in a 
  previous work \cite{closing}. 
In a second stage  we 
use the results of \cite{bs6} on the properties of stable slices of $J^-$ together with  some potential-theoretic ideas
to actually derive a contradiction. 

The  conservative case is treated in Section \ref{sec:proof conservative} by a perturbative 
argument.   If $f$ is conservative and 
 hyperbolic on $J^*$, we can find a holomorphic family $(f_\la)$ with $f_0 = f$ 
 containing dissipative parameters, on which $J^*$ moves under a holomorphic motion. 
Again we assume   that  $J^*(f)\neq J(f)$, 
and use the extension properties of the holomorphic motion of $J^*$ obtained
 in  \cite{tangencies} to derive a contradiction 
from the  previously proven dissipative case.  

\section{Preliminaries}\label{sec:prel}

In this section we recall some basic facts on the dynamics of 
polynomial automorphisms of $\cd$ and hyperbolic dynamics,  and establish a few preliminary results. 

\subsection{Vocabulary and basic facts}\label{subs:vocabulary}
Let $f$ be a polynomial diffeomorphism of $\cd$ with non-trivial dynamics. This is the case exactly when 
the {\em dynamical degree} $\lim_{n\to\infty} (\deg(f^n))^{1/n}$ is larger than 1. 
By   \cite{fm} there exists  a polynomial
change of coordinates in which $f$ is expressed as a composition of   Hénon mappings $(z,w)\mapsto (p_i(z)+a_iw,a_iz)$. 
We fix such coordinates from now on. 
The degree of $f$  is $d=\prod \deg(p_i)\geq 2$ and the relation $\deg(f^n) = d^n$ holds so that $d$ coincides with the 
dynamical degree of the original map. 

In these adapted coordinates, let $$V_R^+  = \set {(z,w)\in \cd, \ \abs{z}\geq R, \ \abs{w}< \abs{z}} \text{ and }
V_R^-  = \set {(z,w)\in \cd, \ \abs{z}\geq R, \ \abs{w}< \abs{z}}
$$
We fix  $R_0>0$   so  large   that for $R\geq R_0$ 
$f(V_R^+)\subset V_{2R}^+$ and $f\inv(V_R^-)\subset V_{2R}^-$. Hence the points of $V_R^+$ (resp. $V_R^-$) escape under forward (resp. backward) iteration. We denote by $\bb$ the bidisk 
$D(0, R_0)^2$.  The non-wandering set of $f$ is contained in $\bb$. 

An object (subset, current, or subvariety) in $\bb$ is said to be 
{\em vertical} (resp.  {\em horizontal}) if its closure in $\overline \bb$  is disjoint from 
$\set{\abs{z}=R_0}$ (resp. $\set{\abs{w}=R_0}$). A vertical subvariety has a {\em degree}, which is the number of intersection points 
with a generic horizontal line. 

Here are some  standard facts and  notation   (see e.g. \cite{bs1, bs2, bls}): 
\begin{itemize}
\item $K^\pm$ is the set of points with bounded forward orbits under $f^{\pm1}$ 
 and $K= K^+\cap K^-$. Note that $K^+$ is vertical in $\bb$ and 
 $f(\bb\cap K^+)\subset K^+$. Similarly, $K^-$ is horizontal and $f\inv(\bb\cap K^-)\subset K^-$. 
 \item The complement of $K^\pm$ is denoted by $U^\pm$. 
\item $J^\pm = \fr K^\pm$ are the forward and backward Julia sets. 
Since $f$ is dissipative, $K^- = J^-$. 
\item $J = J^+\cap J^-$ is the Julia set. 
\item $J^*\subset J$ is the closure of the set of saddle periodic points.  It is also the support of the unique 
measure of maximal entropy $\log d$. 
\end{itemize}

The {\em dynamical Green functions} $G^\pm$ are defined by $G^\pm(z,w)  = \lim d^{-n} \log^\pm\norm{f^n(z,w)}$. These 
  are non-negative  continuous   plurisubharmonic functions on $\cd$, such that $K^\pm = \set{G^\pm = 0}$ and 
  $G^\pm$ is pluriharmonic on  $U^\pm = \set{G^\pm > 0}$. 
We let  $T^\pm = dd^cG^\pm$. The maximum principle implies that    $\supp(T^\pm) = J^\pm$. 

The {\em restriction} $T^+\rest{D}$ of $T^+$ to a submanifold $D$ is locally
  given by
    $\Delta(G^+\rest{D})$ and since 
   $G^+ $ is continuous it 
    coincides\footnote{It is now standard to define $d^c   = \frac{i}{2\pi}(\overline\fr-\fr)$. Accordingly, $\Delta$ here is 
     $1/2\pi$ times the ordinary Laplacian.}  with
  the wedge product $T^+\wedge [D]$. 
A useful remark is that  
if $x$ belongs to $K^+$ and $D\subset \cd$ is a  holomorphic disk  along which  $G^+$ is harmonic,
 then $D\subset K^+$ and 
  $\lrpar{f^n\rest{\Delta}}_{n\geq 1}$ is a normal family.

 If $p$ is a saddle periodic point or   more generally if it belongs to a hyperbolic saddle set, it admits stable and unstable manifolds 
 $W^{s/u}(p)$. Each of them is an immersed Riemann surface biholomorphic to $\cc$ and by \cite{bs2, fs}
 $W^s(p)$ (resp. $W^u(p)$) is 
   dense in $J^+$ (resp. $J^-$). A key point in the present 
    paper is to analyse  the topological properties of  sets of the form $K^-\cap W^s(p)$ or $K^+\cap W^u(p)$. 
    Following \cite{tangencies}, we define the {\em intrinsic topology} 
    to be the topology induced on a stable (resp. unstable) manifold by the biholomorphism $W^s\simeq \cc$, and
    the corresponding concepts of boundary, interior, etc. will be labelled with the subscript $\mathrm{i}$: $\fr_{\,\mathrm{i}}$, 
    $\Int_{\mathrm i}$, etc. 
    
    The following basic lemma will be used several times. 

 \begin{lem}[{\cite[Lemma 5.1]{tangencies}}]\label{lem:boundary}
 Let $p$ be a saddle periodic point.
Then the boundary    of $W^s(p)\cap J^-$ relative to  the intrinsic topology in $W^s(p)$
  is contained in $J^*$.
 \end{lem}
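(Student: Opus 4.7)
The plan is to relate $\fr_{\,\mathrm{i}}(W^s(p)\cap J^-)$ to the support of the slice current $T^-\wedge [W^s(p)]$ via a potential-theoretic argument, and then to argue that this support is contained in $J^*$ using the laminar structure of $T^-$.

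First, I set up the problem on $\cc$. Let $\phi\colon \cc\to\cd$ be a uniformization of $W^s(p)$, normalized so that $\phi(0)=p$, and put $u:=G^-\circ\phi$, a non-negative continuous subharmonic function on $\cc$. Since $\set{G^-=0}=K^-$ and $K^-=J^-$ in the dissipative setting, we have $\set{u=0}=\phi\inv(W^s(p)\cap J^-)$. By definition, the intrinsic topology on $W^s(p)$ is the one transported by $\phi$ from the Euclidean topology on $\cc$, so $\fr_{\,\mathrm{i}}(W^s(p)\cap J^-)=\phi(\fr\set{u=0})$.

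Next comes the minimum principle. Let $x\in\fr_{\,\mathrm{i}}(W^s(p)\cap J^-)$ and $z_0:=\phi\inv(x)$, so that $u(z_0)=0$ while $z_0$ is accumulated by points where $u>0$. Were $u$ harmonic on some disk $V\ni z_0$, it would be a non-negative harmonic function attaining its minimum at an interior point, hence identically zero by the minimum principle, contradicting the existence of points arbitrarily close to $z_0$ with $u>0$. Therefore $dd^cu$ carries positive mass in every neighborhood of $z_0$, so $z_0\in\supp(dd^cu)$. Via $\phi$ this yields $x\in\supp\lrpar{T^-\wedge[W^s(p)]}$, where the slice is represented on $W^s(p)$ by $dd^c\lrpar{G^-\rest{W^s(p)}}$ thanks to the continuity of $G^-$.

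It remains to show that $\supp(T^-\wedge[W^s(p)])\subset J^*$. For this one invokes the laminar structure of $T^-$ established in \cite{bls}: near any point of $J^-$, $T^-$ admits a uniformly laminar decomposition $T^-=\int [D_\alpha]\,d\nu(\alpha)$ whose leaves $D_\alpha$ are pieces of unstable manifolds, arising as limits of iterates $f^n(W^u_{\mathrm{loc}}(q))$ for saddle points $q$. The slice $T^-\wedge [W^s(p)]$ is then an integral of transverse intersection measures between $W^s(p)$ and the leaves $D_\alpha$, and each transverse intersection with some iterate $f^n(W^u_{\mathrm{loc}}(q))$ gives a transverse homoclinic point for $q$. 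Such points are accumulated by saddle periodic orbits via the Smale--Birkhoff horseshoe construction, and hence lie in $J^*$.

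The principal technical obstacle is the last step: the slicing must avoid tangencies with the laminar leaves, and the laminar decomposition of $T^-$ has to be precise enough to relate its leaves to unstable manifolds of genuine saddles, so that the Smale--Birkhoff argument can be applied. Everything else is just potential theory of subharmonic functions of one complex variable.
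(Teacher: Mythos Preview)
The paper does not prove this lemma here; it simply quotes it from \cite[Lemma 5.1]{tangencies}. So I assess your argument on its own merits.

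Your potential-theoretic reduction is correct: pulling $G^-$ back via the uniformization and applying the minimum principle gives $\fr_{\,\mathrm i}(W^s(p)\cap K^-)\subset\supp(T^-\wedge[W^s(p)])$. (One caveat: identifying $\set{u=0}$ with $\phi^{-1}(W^s(p)\cap J^-)$ uses $K^-=J^-$, i.e.\ dissipativity, which is not part of the hypothesis as stated---though in this paper the lemma is only invoked in that regime.)

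The genuine gap is your third step. The current $T^-$ is laminar in the sense of \cite{bls}, but it is \emph{not} uniformly laminar near an arbitrary point of $J^-$, and the disks appearing in its laminar approximations are not in general pieces of unstable manifolds of saddle \emph{periodic} points. They are obtained as Hausdorff limits of pieces of $f^n(W^u_\loc(q))$, and a limit of such curves need not itself be one, so you cannot apply the Smale--Birkhoff horseshoe to an individual leaf $D_\alpha$ as you propose. You flag this yourself as ``the principal technical obstacle'', and the sketch offered does not close it.

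The standard fix is to reverse the roles of $T^+$ and $T^-$. As the paper recalls just before the lemma, any relatively compact disk $D\subset W^s(p)$ is subordinate to $T^+$: there is a non-trivial uniformly laminar $S^+\le T^+$ having $D$ as a leaf. Since $S^+$ and $T^-$ have continuous potentials, $S^+\wedge T^-$ is well defined and $S^+\wedge T^-\le T^+\wedge T^-=\mu$, hence $\supp(S^+\wedge T^-)\subset J^*$. Continuity of $G^-$ makes the slices $T^-\wedge[L]$ depend continuously on the leaf $L$ of $S^+$, and since $D$ lies in the support of the transverse measure of $S^+$, any point of $\supp(T^-\wedge[D])$ is forced into $\supp(S^+\wedge T^-)\subset J^*$. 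This is exactly the mechanism the paper uses a few lines later in the proof of Proposition~\ref{prop:local_product}.
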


  We denote by $W^s_\bb(p)$ the connected component of $W^s(p)\cap \bb$ containing $p$ (and accordingly for $W^u$). 
   Likewise  $W^s_\delta(p)$ is the connected component of $W^s(p)\cap B(p, \delta)$ containing $p$, and 
 $W^s_\loc(p)$ denotes an unspecified open neighborhood of $p$ in $W^s(p)$.  
   
By \cite{bls}, the currents $T^\pm$ have geometric structure, related to the decomposition of $J^\pm$ into
 stable and unstable manifolds. 
By {\em  lamination} by Riemann surfaces we mean a     closed subset
 $\el$  of  some open set $\om\subset \cd$    
such that   every $p\in \el$ admits a neighborhood $B$
biholomorphic to a bidisk, such that in the corresponding coordinates, 
a neighborhood of $p$ in $\el$  is a union of disjoint graphs (that is, a holomorphic motion) 
over the first coordinate in $B$. 
 A  positive current $S$ is {\em uniformly laminar} if there is a 
lamination of $\supp(S)$ by Riemann surfaces and in the corresponding
 local coordinates  $S$  is locally  expressed as  $\int  [\Delta_a]\, d\nu(a)$.  
 These disks will be said   {\em subordinate} to $S$. 
 
 A holomorphic disk $D$ is {\em subordinate} to $T^+$ if there exists a non-zero 
 uniformly laminar current $S\leq T^+$ such that 
 $D$ is subordinate to $S$. By \cite[Prop. 2.3]{connex}, if $p$ is any saddle point, then 
 any relatively compact disk $D\subset W^s(p)$ is subordinate to $T^+$. 

 \subsection{Stable (dis)connectivity}
It was shown in \cite{bs6} that the connectivity properties of sets of the form $K^+\cap W^u(p)$  (resp. $K^-\cap W^s(p)$)
carry deep information on the 
geometry of the Julia set. 
We say  that $f$ is   {\em stably connected} if  $U^-\cap W^s(p)$ is simply connected for some
(and then any) saddle point $p$, and {\em stably disconnected} otherwise. Equivalently, $f$ is stably disconnected if for some
 saddle point $p$, $W^s(p)\cap K^-$ admits a compact component relative to  the intrinsic topology. 
 This actually implies the stronger property that  most components of $W^s(p)\cap K^-$ are points 
  (see the proof of Lemma \ref{lem:locally simply connected} below for more details).  
 
By \cite[Cor. 7.4]{bs6},   a dissipative polynomial automorphism
 is always stably disconnected. It was observed in \cite{connex} that
this implies a strong non-extremality property for the current $T^+\rest{\bb}$: there exists   a decomposition 
 $T^+\rest{\bb} = \sum_{k=1}^\infty T_k^+$ 
where $T_k^+$ is an average of integration currents over a family 
 of disjoint vertical disks of degree $k$ (see \cite[Thm 2.4]{connex}).

 \begin{lem}\label{lem:non extremal}
Let  $f$ be dissipative and  hyperbolic on $J^*$ and let $q\in J^*$. Then $q$ belongs to the support of $T^+\rest{W^u(q)}$ and 
for $(T^+\rest{W^u(q)})$-a.e. $q'$ near $q$, $W^s_\bb(q')$ is a vertical manifold of finite
 degree in $\bb$. 
\end{lem}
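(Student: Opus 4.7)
The plan is to exploit the strong laminar structure of $T^+\rest\bb$ afforded by dissipativity. Since $f$ is dissipative, \cite[Cor.~7.4]{bs6} yields stable disconnectivity, so \cite[Thm.~2.4]{connex} gives a decomposition
\[ T^+\rest\bb=\sum_{k\geq 1}T^+_k,\qquad T^+_k=\int[\Delta^k_a]\,d\nu_k(a), \]
with the $\Delta^k_a$ pairwise disjoint vertical disks of degree $k$ in $\bb$. Slicing by $W^u(q)$, which is transverse to the vertical direction near $q$ by hyperbolicity, produces
\[ T^+\rest{W^u(q)}=\sum_{k\geq 1}\int\bigl[\Delta^k_a\cap W^u(q)\bigr]\,d\nu_k(a), \]
a locally finite positive measure on $W^u(q)$ concentrated on transverse intersections with the leaves.

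To see that $q\in\supp(T^+\rest{W^u(q)})$, I would approximate $q$ by saddles $p_n\in J^*$. Hyperbolicity on $J^*$ makes $W^s_\loc(p_n)\to W^s_\loc(q)$ continuously and transversally to $W^u_\loc(q)$, so the intersections $x_n=W^u(q)\cap W^s_\loc(p_n)$ converge to $q$. By \cite[Prop.~2.3]{connex} relatively compact disks in $W^s(p_n)$ are subordinate to $T^+$, and the associated uniformly laminar currents wedge with $[W^u(q)]$ to produce mass of $T^+\rest{W^u(q)}$ near $x_n$. Hence every neighborhood of $q$ in $W^u(q)$ has positive $T^+\rest{W^u(q)}$-mass.

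For the almost-everywhere statement, the central task is to identify the leaves $\Delta^k_a$ that meet a small neighborhood of $q$ with vertical stable manifolds. For $\nu_k$-a.e.~$a$ one has $T^+\rest{\Delta^k_a}=0$, so $G^+$ is harmonic along $\Delta^k_a$; since $\Delta^k_a$ is relatively compact in $\bb$ and meets $K^+$ (at its intersection with $W^u(q)$), the minimum principle forces $\Delta^k_a\subset K^+$. The family $(f^n\rest{\Delta^k_a})_n$ is then normal, and at any $q'\in\Delta^k_a\cap J^*$ hyperbolicity forces the tangent to $\Delta^k_a$ at $q'$ to be the stable direction, since otherwise $df^n$ would expand it and contradict normality. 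Uniqueness of local stable manifolds yields $\Delta^k_a=W^s_\loc(q')$ near $q'$; analytic continuation within $W^s(q')\simeq\cc$ then gives $\Delta^k_a\subset W^s(q')$, and since $\Delta^k_a$ is a connected vertical subvariety of $\bb$ through $q'$, this forces $W^s_\bb(q')=\Delta^k_a$, a vertical manifold of degree $k$. That $(T^+\rest{W^u(q)})$-a.e.~$q'$ near $q$ lies in $J^*$ follows from the local product structure around $J^*$: $T^+\rest{W^u(q)}$ is locally a transverse slice of $\mu=T^+\wedge T^-$, hence supported in $\supp\mu=J^*$.

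The main obstacle is the identification step of the third paragraph: one must carefully justify that $W^s_\bb(q')$ actually coincides with a single leaf $\Delta^k_a$ rather than merely containing it (no extra components can appear in the connected component through $q'$), and that the interchange between the uniformly laminar decomposition of $T^+\rest\bb$ and the local stable lamination of $J^*$ is licit on a full-measure set of leaves. In particular, the proof that $(T^+\rest{W^u(q)})$-a.e.~$q'$ lies in $J^*$ (so the stable manifold theorem applies) really uses both hyperbolicity on $J^*$ and the dissipative laminar structure simultaneously.
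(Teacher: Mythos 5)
Your proof of the second assertion follows the same route as the paper: invoke the decomposition $T^+\rest{\bb}=\sum_k T^+_k$ of \cite[Thm.\ 2.4]{connex} (available since dissipativity forces stable disconnectivity by \cite[Cor.\ 7.4]{bs6}), slice by $W^u_\loc(q)$, use geometric intersection theory of laminar currents to place the intersection points $q'$ in $\supp(T^+\wedge T^-)=J^*$ (this is exactly the content of "$W^u_\loc(q)$ is subordinate to $T^-$"), and finally identify the leaf through $q'$ with $W^s_\bb(q')$ once one knows forward iterates are bounded along it. Your tangent-direction/normality argument for the identification step just spells out what the paper compresses into "a manifold through $q'$ along which forward iterates are bounded, hence $\Gamma=W^s_\bb(q')$", and your worry about "extra components" is settled by the fact that $\Gamma$, being closed in $\bb$ and open in $W^s(q')$, must be a connected component of $W^s(q')\cap\bb$. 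This is all sound.

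Where you diverge from the paper is the first assertion, $q\in\supp\bigl(T^+\rest{W^u(q)}\bigr)$. The paper disposes of it in one sentence: $(f^n)_{n\ge 0}$ is not normal on $W^u_\loc(q)$ (\cite[Lem.\ 2.8]{bls}), so $G^+\rest{W^u_\loc(q)}$ cannot vanish on any neighborhood of $q$, and since $G^+\ge 0$ with $G^+(q)=0$, harmonicity on a neighborhood is likewise excluded by the minimum principle. Your alternative, approximating $q$ by saddles $p_n$ and extracting mass from the laminar currents having $W^s_\loc(p_n)$ as leaves, is more laborious: it requires the extra observation that a disk subordinate to $T^+$ necessarily sits in the support of the transverse measure of the corresponding laminar current, so that nearby leaves really do carry positive measure and cross $W^u_\loc(q)$ near $x_n=W^u_\loc(q)\cap W^s_\loc(p_n)$. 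This is true (it is built into the definition of subordination), but you should say it; as written, the passage "the associated uniformly laminar currents wedge with $[W^u(q)]$ to produce mass\ldots near $x_n$" glosses over the point. The paper's shortcut avoids this entirely and buys nothing extra, so you may as well adopt it.

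One small remark on the "a.e." in your third paragraph: the reason $G^+$ is harmonic along $\Delta^k_a$ (equivalently $\Delta^k_a\subset K^+$) is not a generic statement about $\nu_k$-a.e.\ leaves; in the construction of \cite[Thm.\ 2.4]{connex} the disks $\Delta^k_a$ are built inside $K^+$ from the outset, so every leaf satisfies it. If you instead want to derive $T^+\rest{\Delta^k_a}=0$ from the decomposition itself, be aware that this requires knowing the leaves across different $k$ are mutually disjoint (which they are, being distinct components of $K^+\cap$ vertical slices), a point worth making explicit since the paper's statement only emphasizes disjointness within each $T^+_k$.
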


\begin{proof}
The first assertion easily follows from the fact that $(f^n)_{n\geq 0}$ cannot be a normal family on $W^u_\loc(q)$  
(see \cite[Lemma 2.8]{bls}).
The second one is a consequence of 
 \cite[Thm 2.4]{connex}. Indeed as observed above  $T^+\rest{\bb}$ admits a decomposition 
 $T^+\rest{\bb} = \sum_{k=1}^\infty T_k^+$ 
where $T_k^+$ is  made of vertical disks of degree $k$. Thus  $T^+\rest{W^u_\loc(q)} 
 = T^+\wedge [W^u_\loc(q)] = \sum_k T^+_k\wedge [W^u_\loc(q)]$. Now if $\Gamma$ is a leaf of some $T_k^+$ intersecting 
 $W^u_\loc(q)$ at $q'$, then since $W^u_\loc(q)$ is subordinate to $T^-$, $q'$ belongs to $ J^*$ and 
 $\Gamma$ is a manifold through $q'$ along which forward iterates are bounded, hence 
 $\Gamma = W^s_\bb (q')$. 
 \end{proof}

\subsection{Hyperbolicity and local product structure}
Let us recall some generalities from hyperbolic dynamics, specialized to our situation.
A {\em (saddle) hyperbolic set} for $f$  is a compact invariant set 
 $\Lambda\subset \cd$ such that $T\cd\rest{\Lambda}$ admits a hyperbolic splitting, i.e. 
 $T\cd\rest{\Lambda}  = E^s\oplus E^u$, where $E^s$ and $E^u$ are continuous line bundles such that 
 $E^s$ (resp. $E^u$) is uniformly contracted (resp. expanded) by $df$. Then there exists $\delta_1>0$ such that 
 $W^s_{\delta_1}(\Lambda):=\bigcup_{p\in \Lambda} W^s_{\delta_1}(x)$  
 and $W^u_{\delta_1}(\Lambda):=\bigcup_{p\in \Lambda} W^u_{\delta_1}(x)$ form laminations in 
 the $\delta_1$-neighborhood  of $\Lambda$.  
 
 A hyperbolic set is {\em locally maximal} if 
 there exists an open neighborhood $\mathcal N$ of $\Lambda$ such that $\Lambda   = \bigcap_{n\in \zz} f^{-n}(\mathcal N)$. It 
 has {\em local product structure} if there exists $0< \delta_2 \leq \delta_1$ such that if $p, q\in \Lambda$ are such that 
 $d(p,q)<\delta_2$ then $W^s_{\delta_1}(p)\cap W^u_{\delta_1}(q)$ 
 consists of exactly  one point belonging to $\Lambda$. It turns out that 
 these two properties are equivalent 
    (see \cite[\S 4.1]{yoccoz}). 
    
We will use the following consequence  of the shadowing lemma. 

\begin{prop} \label{prop:shadowing}
Let $\Lambda$ be a compact locally maximal hyperbolic set for a polynomial diffeomorphism $f$ of $\cd$. Then there exist
positive constants $\eta$, $\alpha$ and $A$ 
such that  for every $n\geq 0$; 
if $x$ is such that $\set{x, \ldots,  f^n(x)}\subset \Lambda_\eta$ then there exists $y\in \Lambda$ such that 
$x$ is $Ae^{-\alpha n}$-close to the local stable manifold of $y$.  

A similar result holds for negative iterates: if $\set{f^{-n}(x), \ldots,  x}\subset \Lambda_\eta$ 
then there exists $z\in \Lambda$ such that 
$x$ is $Ae^{-\alpha n}$-close to the local unstable manifold of $z$.  
\end{prop}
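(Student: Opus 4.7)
The plan is to combine the classical shadowing lemma for locally maximal hyperbolic sets with the hyperbolic splitting on a neighborhood of $\Lambda$, in order to convert an orbit segment that remains close to $\Lambda$ into exponential proximity to a local stable manifold.

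First, I would extract a bi-infinite pseudo-orbit from the given data. Given $x$ with $\{x,\ldots,f^n(x)\}\subset\Lambda_\eta$, for each $i=0,\ldots,n$ pick a point $p_i\in\Lambda$ with $d(f^i(x),p_i)<\eta$, and extend to a bi-infinite sequence in $\Lambda$ by setting $p_i=f^i(p_0)$ for $i<0$ and $p_i=f^{i-n}(p_n)$ for $i>n$. By uniform continuity of $f$ on a compact neighborhood of $\Lambda$, this is a $\delta(\eta)$-pseudo-orbit with $\delta(\eta)\to 0$ as $\eta\to 0$. The shadowing lemma then says that for any $\epsilon>0$ there exists $\delta_0>0$ such that every $\delta_0$-pseudo-orbit in a small neighborhood of $\Lambda$ is uniquely $\epsilon$-shadowed by a genuine orbit, which lies in $\Lambda$ by local maximality. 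Choosing $\eta$ so small that $\delta(\eta)<\delta_0$ produces $y\in\Lambda$ with $d(f^i(y),p_i)<\epsilon$ for all $i\in\zz$, and in particular $d(f^i(x),f^i(y))\leq\eta+\epsilon=:\eta'$ for $0\leq i\leq n$.

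The heart of the proof is then to upgrade this orbit proximity into exponential proximity of $x$ to $W^s_\loc(y)$. Let $\lambda>1$ be a uniform lower bound for the expansion of $df$ on $E^u$ over $\Lambda$. Using local coordinates along the orbit of $y$ adapted to the continuous splitting $E^s\oplus E^u$, I would decompose $x-y$ along $E^u(y)$ and $E^s(y)$, call $u$ its unstable component, and invoke a standard graph-transform (or inclination-lemma) argument: the unstable component of $f^i(x)-f^i(y)$ grows at rate at least $\lambda$, hence the hypothesis that the orbit of $x$ stays $\eta'$-close to that of $y$ for $n$ iterations forces $\abs{u}\leq C\eta'\lambda^{-n}$. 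Since $W^s_\loc(y)$ is a $C^1$ graph tangent to $E^s(y)$ at $y$, this translates into $d(x,W^s_\loc(y))\leq A e^{-\alpha n}$ with $\alpha=\log\lambda$.

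The statement for negative iterates follows by applying the same argument to $f^{-1}$, which exchanges the roles of the stable and unstable directions. The main (though classical) delicate point is the quantitative graph-transform step: the shadowing lemma alone gives only orbit proximity, and the exponential decay in $n$ of the distance to the stable manifold requires an explicit use of the hyperbolic expansion along $E^u$ together with a careful control of the non-linear remainders of $f$ along the orbit of $y$.
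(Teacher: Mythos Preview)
Your proposal is correct and follows essentially the same strategy as the paper's (sketched) proof: extend the given orbit segment to a bi-infinite pseudo-orbit, apply the shadowing lemma together with local maximality to produce $y\in\Lambda$ whose orbit stays uniformly close to that of $x$ for $n$ steps, and then invoke standard graph-transform/inclination estimates to convert this into exponential proximity of $x$ to $W^s_\loc(y)$. The only cosmetic difference is that the paper uses the actual iterates $f^k(x)$ as the middle portion of the pseudo-orbit (choosing points of $\Lambda$ only at the two ends to extend bi-infinitely), whereas you project each $f^i(x)$ to a nearby $p_i\in\Lambda$; both constructions yield the same shadowing orbit and the same conclusion.
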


The following corollary  is well-known.

\begin{cor} \label{cor:shadowing} If $\Lambda$ is a compact locally maximal hyperbolic set,  then
$$W^s(\Lambda) := \set{x\in \cd,  f^n(x)\underset{n\to\infty}\longrightarrow \Lambda} = \bigcup_{p\in \Lambda} W^s(p) 
\text{ and similarly } W^u(\Lambda)  = \bigcup_{p\in \Lambda} W^u(p). $$
\end{cor}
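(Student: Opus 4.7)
The plan is to establish the two set-theoretic inclusions separately. The inclusion $\bigcup_{p\in \La} W^s(p) \subset W^s(\La)$ is immediate from the invariance of $\La$: if $x \in W^s(p)$ with $p\in \La$, then by definition $d(f^n(x), f^n(p)) \to 0$, and since $f^n(p) \in \La$, it follows that $\dist(f^n(x), \La) \to 0$.

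For the converse, fix $x$ with $f^n(x) \to \La$ and choose $N$ so large that $f^n(x) \in \La_\eta$ for every $n \geq N$, where $\eta$ is the constant furnished by Proposition \ref{prop:shadowing}. For each $n \geq 0$ the orbit segment $\set{f^N(x), f^{N+1}(x), \ldots, f^{N+n}(x)}$ lies in $\La_\eta$, so applying the shadowing proposition to $f^N(x)$ with this segment produces $y_n \in \La$ such that $f^N(x)$ is $Ae^{-\alpha n}$-close to the local stable manifold $W^s_{\delta_1}(y_n)$. By compactness of $\La$ we extract a subsequence $y_{n_k} \to y \in \La$.

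Because the stable lamination $W^s_{\delta_1}(\La)$ varies continuously with the base point, the local disks $W^s_{\delta_1}(y_{n_k})$ converge (as graphs in adapted charts) to $W^s_{\delta_1}(y)$, and consequently $\dist(f^N(x), W^s_{\delta_1}(y_{n_k})) \to \dist(f^N(x), W^s_{\delta_1}(y))$. The bound $Ae^{-\alpha n_k}\to 0$ forces this distance to vanish, and since the local stable manifold is closed, $f^N(x) \in W^s(y)$. Pulling back by $f^{-N}$ and using invariance of $\La$ gives $x \in W^s(f^{-N}(y))$ with $f^{-N}(y) \in \La$, as desired. The unstable statement follows by applying the same argument to $f\inv$.

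The only delicate point, not really an obstacle, is the continuity of the stable lamination in the base point, which is used to pass the distance estimate to the limit; this is a standard output of the graph transform construction over a locally maximal hyperbolic set.
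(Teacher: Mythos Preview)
Your proof is correct. Note, however, that the paper does not actually give a proof of this corollary: it is introduced as ``well-known'', and the \emph{Proof (sketch)} that follows it is in fact a proof of the preceding Proposition~\ref{prop:shadowing} (it begins ``Given an orbit segment $\{x,\ldots,f^n(x)\}$ as in the statement of the proposition\ldots'' and concludes with the estimate $d(x,W^s_\delta(y))\leq Ae^{-\alpha n}$, which is precisely the proposition's assertion). The corollary is left to the reader as an immediate consequence.

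Your argument is exactly the natural way to derive the corollary from the proposition: the trivial inclusion, then for the other one apply Proposition~\ref{prop:shadowing} to arbitrarily long forward orbit segments of $f^N(x)$ and pass to the limit using continuity of $p\mapsto W^s_{\delta_1}(p)$. One cosmetic simplification: since the local stable lamination $W^s_{\delta_1}(\Lambda)$ is a closed subset of the $\delta_1$-neighborhood of $\Lambda$, the estimates $\dist(f^N(x),W^s_{\delta_1}(\Lambda))\leq Ae^{-\alpha n}\to 0$ already place $f^N(x)$ in $W^s_{\delta_1}(\Lambda)$, so extracting a convergent subsequence of the $y_n$ is not strictly necessary. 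Either way the argument is sound.
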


Note however that $W^s(\Lambda)$, being an increasing union of laminations,  doesn't need to have
a lamination structure (this is already false when $\Lambda$ is a hyperbolic fixed point). 

\begin{proof}[Proof (sketch)]
This is very classical. Given an orbit segment $\set{x, \ldots,  f^n(x)}$ as in the statement of the proposition, let 
$y^{(0)}$ (resp. $y^{(n)}$) be a point in $\Lambda$ such that $d(x,  y^{(0)}) <\eta$ (resp. $d(x,  y^{(n)})<\eta$). Then define a 
$\eta$-pseudo-orbit $(y^{(k)})_{k\in \zz}$ 
as follows
$$
y^{(k)}   =  \begin{cases} f^k\big(y^{(0)}\big) &\text{ for }k< 0;\\
  f^k(x)&\text{ for }0\leq k\leq n; \\  f^{k-n}\big(y^{(n)}\big)&\text{ for }k>n. \end{cases} $$ 
  Then 
  if $\eta$ is small enough by local maximality and 
  the shadowing lemma there exists a unique $y\in \La$ such that for every $k\in \zz$, $d\big(f^k(y), y^{(k)}\big)<C\eta$
(where $C$ is some constant depending on $(f, \Lambda)$, see \cite[\S 4.1]{yoccoz}). 
In particular for  $ 0\leq k\leq n$ we have   $d\big(f^k(x), f^k(y)\big) <C\eta$ and 
it follows from standard graph transform estimates that $d(x, W^s_\delta(y))\leq Ae^{-\alpha n}$. 
 \end{proof}

The next result is a simple application of the techniques of \cite{bls}.

\begin{prop}\label{prop:local_product}
If $J^*$ is hyperbolic then it has local product structure. Furthermore global stable and unstable manifolds intersect only
in $J^*$:
 $$W^s(J^*)\cap W^u(J^*)  = J^*.$$
\end{prop}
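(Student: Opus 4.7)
I would first establish the local product structure and then deduce the equality $W^s(J^*)\cap W^u(J^*)=J^*$ from it.

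\emph{Local product structure.} Hyperbolicity of $J^*$ gives local stable and unstable manifolds $W^s_{\delta_1}(x)$ and $W^u_{\delta_1}(x)$ of uniform size, varying continuously (in the $C^1$ topology) with $x\in J^*$. For $p,q\in J^*$ with $d(p,q)<\delta_2$ small enough, the transverse intersection of $W^s_{\delta_1}(p)$ with $W^u_{\delta_1}(p)$ at $p$ persists to give a unique transverse intersection $r\in W^s_{\delta_1}(p)\cap W^u_{\delta_1}(q)$ close to $p$. To show $r\in J^*$, I would approximate $p$ and $q$ by saddle periodic points $p_k,q_k\in J^*$, which are dense by definition of $J^*$. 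By continuous dependence of local stable/unstable manifolds on the base point in $J^*$, $r_k:=W^s_{\delta_1}(p_k)\cap W^u_{\delta_1}(q_k)\to r$. Each $r_k$ is a transverse heteroclinic intersection between saddle periodic points, so by the standard Smale horseshoe argument (as used throughout \cite{bls}) it is accumulated by saddle periodic points of $f$; hence $r_k\in J^*$, and closedness of $J^*$ yields $r\in J^*$. Equivalence with local maximality then follows from the general reference cited in the paper.

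\emph{Intersection equality.} The inclusion $J^*\subset W^s(J^*)\cap W^u(J^*)$ is immediate. For the converse, let $r\in W^s(p)\cap W^u(q)$ with $p,q\in J^*$ (using Corollary \ref{cor:shadowing}). For $n$ large, $d(f^n(r),f^n(p))$ decays exponentially, so $f^n(r)$ lies in $W^s_{\delta_1}(f^n(p))$ and in the $\eta$-neighborhood $J^*_\eta$; symmetrically $f^{-N}(r)\in W^u_{\delta_1}(f^{-N}(q))$ for $N$ large. I would then apply Proposition \ref{prop:shadowing} (backward shadowing) to a long enough backward orbit segment and transport the resulting local unstable manifold forward, producing $y\in J^*$ close to $f^n(p)$ with $f^n(r)\in W^u_{\delta_1}(y)$. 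Local product structure from the first part then identifies $f^n(r)$ with the unique point $W^s_{\delta_1}(f^n(p))\cap W^u_{\delta_1}(y)\in J^*$, so $r\in J^*$ by $f$-invariance.

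\emph{Main obstacle.} The delicate step is the second part: the orbit of $r$ is only guaranteed to lie in $J^*_\eta$ on its two tails $\abs m\geq M$, so Proposition \ref{prop:shadowing} cannot be applied in both time directions at a single iterate $f^n(r)$. The idea is to choose $n$ large enough that every forward iterate of $f^n(r)$ remains in $J^*_\eta$ (giving the local stable side directly from convergence to $f^n(p)$), and to recover the local unstable side separately: apply backward shadowing on an orbit segment $\set{f^{-N-k}(r),\ldots,f^{-N}(r)}\subset J^*_\eta$ to obtain a local unstable manifold of a point of $J^*$ passing through $f^{-N}(r)$, then transport it forward by $f^{n+N}$. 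Keeping the transported manifold of controlled size with tangent direction close to $E^u$ at $f^n(p)$, so that it falls within the $\delta_1,\delta_2$ window of local product structure, is the main technical point and relies on the uniform hyperbolic estimates standing behind Proposition \ref{prop:shadowing}.
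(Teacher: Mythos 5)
Your first step (local product structure) is a genuinely different and valid route from the paper's. You use density of saddle periodic points plus the Smale horseshoe argument, whereas the paper views $W^s_\delta(p)$ and $W^u_\delta(q)$ as leaves of uniformly laminar currents $S^+\leq T^+$, $S^-\leq T^-$ and applies the geometric intersection theory of \cite{bls} to place the intersection point in $\supp(T^+\wedge T^-)=J^*$. Two caveats about your version: the horseshoe construction at a heteroclinic point $r_k\in W^s(p_k)\cap W^u(q_k)$ requires a return intersection $W^u(p_k)\cap W^s(q_k)\neq\emptyset$, so you are implicitly invoking the \cite{bls} result that all saddle points are homoclinically related; and for $\delta_2$ small the intersection is automatically transverse, which you correctly exploit. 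So this part holds up, at the cost of citing a nontrivial \cite{bls} theorem in place of a different one.

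The second step has a genuine gap which your ``main obstacle'' paragraph acknowledges but underestimates. You want to produce $y\in J^*$ near $f^n(p)$ with $f^n(r)\in W^u_{\delta_1}(y)$, by taking a local unstable leaf through $f^{-N}(r)$ (via backward shadowing) and pushing it forward by $f^{n+N}$. But the forward image $f^{n+N}(W^u_{\delta_1}(z))$ is an exponentially long piece of $W^u(z)$, not a local unstable manifold: it strictly contains $W^u_{\delta_1}(f^{n+N}(z))$ and $f^n(r)$ has no reason to lie in the small piece. The elementary model to keep in mind is a transverse homoclinic point $r$ of a single hyperbolic fixed point $p$ regarded as a locally maximal hyperbolic set $\Lambda=\{p\}$: here $f^n(r)\to p$ along $W^s_\delta(p)$ but $f^n(r)\notin W^u_{\delta_1}(p)$ for any $n$, and indeed $r\notin\Lambda$. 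Thus local product structure and shadowing alone cannot give $W^s(\Lambda)\cap W^u(\Lambda)=\Lambda$; one must use something specific to $J^*$. The paper supplies this via $J^*=\supp(T^+\wedge T^-)$ together with \cite[Lem.~8.2--8.3]{bls} (transverse case) and \cite[Lem.~6.4]{bls} (tangential case). A dynamical alternative in the spirit of your first step would be to approximate $p,q$ by saddles and use homoclinic relatedness again, but then you must also handle tangential intersections of the global manifolds, which your outline does not address and which the paper handles by perturbing $q$ inside $J^*$.
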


\begin{proof}
Hyperbolicity implies that for some $\delta>0$, 
if $p$ and $q$ are close enough,  $W^s_\delta(p)\cap W^u_\delta(q)$ consists of a single point $r$. 
We have to show that $r\in J^*$. Indeed, $W^s_\delta(p)$ (resp. $W^u_\delta(q)$) 
 is a disk subordinate to $T^+$ (resp. $T^-$) so there exists a non-trivial 
uniformly laminar current 
$S^+\leq T^+$  (resp. $S^-\leq T^-$) with $W^s_\delta(p)$ (resp $W^u_\delta(q)$) as a leaf. By \cite[Lem. 8.2]{bls}, $S^+$ and 
$S^-$ have continuous potentials, so the wedge product $S^+\wedge S^-$ is well defined, and geometric intersection theory of uniformly
laminar currents \cite[Lem. 8.3]{bls} implies that $r\in \supp (S^+\wedge S^-)$. Since $S^+\wedge S^-\leq T^+\wedge T^-$ 
we conclude that $r\in J^*$. 

The proof of the second assertion is similar. By local product structure, $$W^s(J^*) = \bigcup_{n\geq 0} f^{-n}(W^s_\delta(J^*)),$$ hence 
 if   $r\in W^s(J^*)$, there exists $p\in J^*$ such that $r\in W^s(p)$ so $r$ 
 belongs to a disk subordinate to $T^+$, and likewise $r\in W^u(q)$ so it belongs to a disk subordinate to $T^+$. Observe that these two disks are distinct: indeed otherwise we would have $W^s(p) = W^u(q)$ which is impossible because $W^s(p)\cap W^u(q)$ is contained in $K$ which is bounded in $\cd$. So $r$ is an isolated intersection between $W^s(p)$ and $W^u(q)$ for the leafwise topology. If this intersection is transverse, we argue as above to conclude that 
 $r$ belongs to $J^*$. If it is a tangency, then by  \cite[Lem. 6.4]{bls} for  $q'\in J^*$ close to $q$, we get   
 transverse  intersections between $W^s(p)$ and  $W^u(q')$  close to $r$ and conclude as in the transverse case. 
\end{proof}

\subsection{Stability} A theory of stability and bifurcations for polynomial automorphisms of $\cd$ was developed in 
\cite{tangencies}, centered on the notion of {\em weak $J^*$-stability}. 

A {\em branched holomorphic motion} over 
a complex manifold $\La$  in $\cd$ is a family of holomorphic graphs over $\La$ in $\La\times \cd$. It is a holomorphic
motion (i.e. an unbranched branched holomorphic motion!) 
when these graphs are disjoint. A holomorphic family $(f_\la)_{\la\in \La}$ of polynomial automorphisms of dynamical 
degree 
$d$ is {\em weakly $J^*$-stable} if the sets $J^*(f_\la)$ move under a 
branched holomorphic motion, and {\em $J^*$-stable}
 if this motion is unbranched. Note that if $f_\lo$ is uniformly hyperbolic on $J^*(f_\lo)$, then it is $J^*$-stable 
near $\lo$ in any holomorphic
 family containing $f_\lo$. 
  
  A number of properties of weakly $J^*$-stable families are established in \cite{tangencies}, including extension properties
  of the branched holomorphic motion of $J^*$ to $K$ (and more generally to $J^+\cup J^-$), that   will be used in 
  Section \ref{sec:proof conservative}. 
  These properties hold under the standing assumption that the family $(f_\la)$ is 
  {\em  substantial }\footnote{There is an unfortunate terminological conflict here: 
  this should not be confused with the notion 
  of substantial dissipativity mentioned in the introduction. }: this means that either   all members of the 
  family are dissipative, or that no  relation of a certain form between multipliers of periodic points persistently 
  holds in the parameter space $\La$. Without entering into the details, let us just note that by 
  \cite[Thm 1.4]{bhi} any open subset of the family of all polynomial automorphisms of dynamical degree $d$ is substantial. 

 \section{Proof of the main theorem: the dissipative case} \label{sec:proof dissipative}

The proof is by contradiction so assume that 
$f$ is a dissipative polynomial diffeomorphism of $\cd$, that is uniformly hyperbolic 
on $J^*$, and that $J^* \subsetneq J$. 

\begin{step}\label{step:disk}
There exists $p\in J^*$ and a holomorphic disk $\Delta \subset W^s(p)$ such that $G^-\rest{\Delta} \equiv 0$. 
\end{step}

The purpose of the  remaining steps \ref{step2} and \ref{step3} will be to show
 that such a ``queer" component of $W^s(p)\cap J^-$ actually does not exist. 

\begin{proof}
We first claim  that it is enough to show that there exists $p\in J^*$ and $q\in W^s(p)$ such that $q\in J\setminus J^*$. Indeed
observe that $W^s(p)\cap J = W^s(p)\cap J^- = W^s(p)\cap K^-$.    By Lemma \ref{lem:boundary} we have that $\fr_{\,\mathrm i}(W^s(p)\cap J^-)\subset J^*$. Hence if $q$ belongs to  $W^s(p) \cap ( J\setminus J^*)$, it belongs to the 
intrinsic interior $\mathrm{Int_i}(W^s(p) \cap J^-$, hence $G^-\equiv 0$ in a neighborhood of $q$ in $W^s(p)$.

Let now $x \in J\setminus J^*$.  By Corollary \ref{cor:shadowing}, if $\omega(x)\subset J^*$ then 
$x\in W^s(J^*)$ and  if $\alpha(x)\subset J^*$ then $x\in W^u(J^*)$. Since $W^u(J^*)\cap W^s(J^*) = J^*$, we infer that  either 
$\omega(x)\not\subset J^*$ or $\alpha(x)\not\subset J^*$. In either case we will show that both 
$W^s(J^*)\cap (J\setminus J^*)$ and $W^u(J^*)\cap (J\setminus J^*)$ are non-empty. Thus by symmetry it is enough to deal with the case where  $\omega(x)\not\subset J^*$.

Choose 
 $\eta$     so  small that  Proposition \ref{prop:shadowing} holds for $J^*$ and 
 $\omega(x)$ is not contained in $\overline{\mathcal{N}}$, where $\mathcal{N}  := (J^*)_\eta$ is the $\eta$-neighborhood of $J^*$.
 
 Consider the sequence of Cesarò averages 
 $\nu_n = \unsur{n} \sum_{k =0}^{n-1} \delta_{f^k(x)}$. By the ergodic closing lemma of \cite{closing}, 
 every cluster value of the 
 sequence $(\nu_n)$ is supported on $J^*$. It follows that the asymptotic proportion of iterates of $x$ belonging to
 $\mathcal{N}$ tends to 1, i.e. 
 $$\unsur{n}\# \set{0\leq k\leq n-1, \ f^k(x) \in \mathcal{N}} \underset{n\cv\infty}\longrightarrow 1. $$ Indeed if a positive proportion of iterates 
 stayed outside $\mathcal{N}$, any cluster limit of $\nu_n$ would have to give positive mass to $\mathcal{N}^\complement$. 
 
 We thus infer that 
 there are arbitrary long strings $\set{x_i, \ldots, x_{i+n}}$  in the orbit of $x$ that are entirely contained in $\mathcal{N}$. Indeed, if on 
 the contrary the length of such a string were uniformly bounded by some $n_0$, then the density of iterates outside 
 $\mathcal{N}$ would be bounded below by $1/(n_0+1)$. Therefore for every $n$ there exists $i_n$ such that 
  $\set{x_{i_n}, \ldots, x_{i_n+n}}\subset \mathcal{N}$. Choose $i_n$ to be minimal with this property. Since $\omega(x) \not\subset  \mathcal{N}$, there exists $j> i$ such that   $x_j\notin \mathcal{N}$. So finally for every $n$ we can find  $i_n < j_n$ such that 
  $j_n-i_n \geq n$,  $\set{x_{i_n}, \ldots, x_{j_n}}\subset \mathcal{N}$, $x_{i_n-1}\notin \mathcal{N}$ and $x_{j_n+1}\notin\mathcal{N}$. 
  
  Let $p$ (resp. $p'$) be a cluster value  of $(x_{i_n-1})$ (resp. $(x_{j_n+1})$). The points  $p$ and $p'$ belong to 
  $J$ because $x$ does, but not to $J^*$ because they lie outside $\mathcal{N}$.  
  It follows from Proposition \ref{prop:shadowing} that 
  $p\in W^s(q)$ for some $q\in J^*$ and   $p'\in W^u(q')$ for some $q'\in J^*$.  The proof is complete.
\end{proof}

\begin{rmk}\label{rmk:substantial}
If $f$ is  substantially dissipative i.e. $\jac(f)<\deg(f)^{-2}$, then the desired  contradiction readily follows from this first step. Indeed
  Wiman's theorem together with uniform  hyperbolicity 
 imply that  the vertical degree of components of stable manifolds in some large bidisk 
 $\bb$ is uniformly bounded (see \cite[Prop. 4.2]{guerini peters}
ou \cite[Lem. 5.1]{lyubich peters2}), and it follows that $J^-\cap W^s(x)$ is totally disconnected for every $x$ 
(see \cite[Thm 2.10]{connex} or \cite[Thm. 4.3]{guerini peters}), which contradicts the conclusion of Step \ref{step:disk}. 
\end{rmk}

The second and third step do not use the assumption that $J\setminus J^*\neq \emptyset$. 

\begin{step} \label{step2}
For every $p\in J^*$, 
if $\Delta$ is a component of $\mathrm{Int}_{\mathrm i}(W^s(p)\cap J^-)$, then $\Delta$ is unbounded for the leafwise topology. 
\end{step}

\begin{proof}
Note first that by the maximum principle, any component of $\mathrm{Int}_{\mathrm i}(W^s(p)\cap J^-) =
\mathrm{Int}_{\mathrm i}(W^s(p)\cap K^-) $ is simply connected, so
$\Delta$ is a topological disk.  Assume by contradiction that $\Delta$ is bounded for the leafwise topology. 
Then iterating forward a few times if needed, we can 
suppose that $\Delta$ is entirely contained in a local product structure box. 

More precisely for  small $\delta>0$, we can fix  
 holomorphic local coordinates $(z,w)$ near $p$  in which  $p=(0,0)$, $W^s_{2\delta}(p) = \set{z=0}$ and 
$W^u_{2\delta}(p) = \set{w=0}$, and assume
$\overline{\Delta}$ is contained in $W^s_{\delta}(p)$. Note that by Lemma \ref{lem:boundary}, $\fr_{\,\mathrm i}\Delta \subset J^*$. 
We can assume that for every $q \in  W^s_{\delta}(p)\cap J^*$, 
 $W^u_{2\delta}(q)$  contains a graph over the disk $D(0, \delta)$ in the first coordinate, with slope bounded by $1/2$. 
Then if $\abs{z_0} \leq \delta$, the holonomy $h_{0, z_0}^u$  
along local unstable leaves is well defined on $ W^s_\delta(p)\cap J^*$ and maps $ W^s_\delta(p)\cap J^*$ into 
$\set{z=z_0}\cap J^-$. This holonomy is a holomorphic motion so   
by Slodkowski's theorem \cite{slodkowski} it extends to a holomorphic motion of $ W^s_\delta(p)$.  In particular the motion 
of $\fr_{\,\mathrm i} \Delta$ extends to a motion of $\Delta$ and it   
makes sense to speak about
  $h_{0, z_0}^u(\Delta)$. This is an open subset of $\set{z=z_0}$, 
  which is topologically a disk and whose boundary is contained 
  in $J^-$. Thus for every $n\geq0$, $f^{-n}(\fr(  h_{0, z_0}^u(\Delta)))$ in contained in $\bb$ 
  and by the maximum principle, the same holds for $f^{-n}(   h_{0, z_0}^u(\Delta))$.
  
   Finally, 
  $U : =\bigcup_{\abs{z_0}<\delta} h_{0, z_0}^u(\Delta)$ is an open set whose negative iterates remain in $\bb$, hence it 
  is contained in   the Fatou set of $f^{-1}$. But since $f$ is dissipative, this Fatou set  is empty, 
   which is the desired contradiction. 
\end{proof}

\begin{step} \label{step3}
 The unstable holonomy preserves the decomposition 
 $$W^s(p) = (W^s(p)\cap J^-)\sqcup(W^s(p)\cap U^-).$$ 
\end{step}

To make this statement precise, observe that   for every $p\in J^*$, 
the components of the complement of 
$\fr_{\,\mathrm i}(W^s(p)\cap J^-)$ in  $W^s(p)$ can be divided into two types: 
components of $\mathrm{Int}_{\mathrm i}(W^s(p)\cap J^-)$ and components of 
$W^s(p)\cap U^-$ (note that since $U^-$ is open in $\cd$, $ W^s(p)\cap U^-$ is open for the intrinsic topology as well). 
Consider as above local coordinates $(z,w)$ 
near $p$  in which  $p=(0,0)$, $W^s_{2\delta}(p) = \set{z=0}$ and 
$W^u_{2\delta}(p) = \set{w=0}$.
The unstable holonomy $h^u_{0, z_0}$ is initially only defined for points of  
$W^s_{\delta}(p)\cap J^* = \fr_{\,\mathrm i}(W_{\delta}^s(p)\cap J^-)$, however by S{\l}odkowski's theorem 
it can be  extended to $W^s_{\delta}(p)$.
By Step \ref{step2}, components of  $\mathrm{Int}_{\mathrm i}(W^s(p)\cap J^-)$
 are leafwise unbounded so they cannot be contained in $W^s_{2\delta}(p)$. Obviously, 
the same holds for components of $W^s(p)\cap U^-$. 

If $q$ belongs to $J^* \cap W^u_{\delta}(p)$, the extended holonomy  $h^u_{p,q}$    
defines  a  homeomorphism  $W^s_{\delta}(p) \to h^u_{p,q}(  W^s_{\delta}(p))$. By local product structure this
homeomorphism preserves $J^*$
so any component of  
$W^s_{\delta}(p) \setminus J^*$ is mapped onto a component of 
$h^u_{p,q}(  W^s_{\delta}(p))\setminus J^*)$, which is itself contained in a component of 
$W^s(q)\setminus J^*$. 
The  claim of Step \ref{step3} is that  the 
extended holonomy  $h^u_{p,q}$ preserves the type of 
components.

Since it doesn't make sense to transport a whole  leafwise unbounded component   by unstable holonomy, to prove this assertion
 we need to find a  criterion that recognizes
 the type of a component  just from local topological properties near a point of its boundary. As already said 
 the maximum principle implies that any component of  $\mathrm{Int}_{\mathrm i}(W^s(p)\cap J^-)$ is simply connected. 
 Thus Step \ref{step3} follows from: 
 
 \begin{lem}\label{lem:locally simply connected}
 If $\om$ is a component of $W^s(p)\cap U^-$, then $\om$ is not 
 simply connected near any   point of $\fr \om$, more precisely: if $q\in \fr\om$ and 
 $N$ is any neighborhood of $q$, there is a loop in $N\cap \om$, homotopic to a point in $N$, and
  enclosing a component of $W^s(p)\cap J^-$
 \end{lem}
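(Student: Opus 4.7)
We pass to the uniformization $\phi\colon\cc\to W^s(p)$ and work with $u:=G^-\rond\phi$, a continuous non-negative subharmonic function on $\cc$ which is harmonic on its positivity locus $\{u>0\}$, with $\{u=0\}=\phi\inv(W^s(p)\cap J^-)$. Set $\tilde\om:=\phi\inv(\om)$, a connected component of $\{u>0\}$; $\tilde q:=\phi\inv(q)\in\fr\tilde\om\subset\{u=0\}$, which lies in $J^*$ by Lemma \ref{lem:boundary}; and take $\tilde N$ to be a small open disk in $\cc$ around $\tilde q$ with $\phi(\tilde N)\subset N$. Two elementary facts drive the argument. First, no component of $\{u>0\}$ can be bounded: a bounded one would give a non-constant non-negative harmonic function on a bounded domain with vanishing boundary data, contradicting the maximum principle; in particular $\tilde\om$ is unbounded. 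Second, if a connected component $C$ of $\{u=0\}$ meets $\fr\tilde\om$ then $C\subset\fr\tilde\om$: any $y\in C\setminus\fr\tilde\om$ admits a neighborhood disjoint from $\tilde\om$, so $C\setminus\fr\tilde\om$ is open in $C$, and connectedness forces it to be empty.

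The argument then splits into two steps: (A) locating a compact connected component $C\subset\tilde N$ of $\{u=0\}$ contained in $\fr\tilde\om$, and (B) enclosing $C$ by the required loop. Granting step (A), step (B) proceeds as follows. By Sard's theorem pick a regular value $c>0$ of $u\rest{\{u>0\}}$ small enough that the connected component $W_c$ of $\{u<c\}$ containing $C$ is relatively compact in $\tilde N$, which is possible if $C$ is suitably separated in $\{u=0\}$ from the material reaching $\fr\tilde N$. Then $\fr W_c\subset\{u=c\}$ is a finite disjoint union of smooth Jordan curves, each lying in a single component of $\{u>0\}$ by connectedness. Since $C\subset\fr\tilde\om$ we have $\tilde\om\cap W_c\ne\emptyset$, and since $\tilde\om$ is unbounded while $W_c$ is bounded, $\tilde\om$ must cross the outermost component $\gamma$ of $\fr W_c$, which therefore lies entirely in $\tilde\om$. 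As a Jordan curve in the simply connected disk $\tilde N$, $\gamma$ bounds a topological disk in $\tilde N$ containing $W_c\supset C$, and $\phi(\gamma)$ is the required loop in $N\cap\om$, null-homotopic in $N$ and enclosing the component $\phi(C)$ of $W^s(p)\cap J^-$.

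The main obstacle is step (A). Stable disconnectedness (\cite[Cor.~7.4]{bs6}) only guarantees the existence of some compact component of $W^s(p)\cap K^-$, whereas what is needed is an abundance of such components near any prescribed boundary point $\tilde q$ of $\tilde\om$, accessible from $\tilde\om$ rather than from another component of $\{u>0\}$. The natural tool is the laminar decomposition $T^+\rest{\bb}=\sum_k T^+_k$ used in Lemma \ref{lem:non extremal}: leaves of $T^+_k$ are bounded vertical disks of degree $k$, generically equal to $W^s_\bb(q')$ for some $q'\in J^*$. Transporting this structure through the local product neighborhood of $\tilde q\in J^*$ (Proposition \ref{prop:local_product}) produces compact connected pieces of $W^s(p)\cap J^-$ densely near $\tilde q$, from which a suitable $C$ should be extractable. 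The delicate point is to guarantee both the accessibility from $\tilde\om$ (rather than from another component of $\{u>0\}$) and the separation needed for step (B); this is where the potential-theoretic structure of $u$ on $\tilde\om$ interacts with the laminar geometry of $T^+$, and constitutes the real content of the proof.
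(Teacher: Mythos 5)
Your proposal correctly identifies the crux of the problem — step (A) — but leaves it unresolved, and the route you sketch for filling it is not the one the paper takes; indeed it is precisely the obstacle you describe (``accessibility from $\tilde\om$'') that the paper's argument is designed to bypass.

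The paper never attempts to directly locate a compact component $C$ of $\set{u=0}$ inside $\fr\tilde\om$. Instead it argues by contradiction: suppose $\om$ \emph{is} locally simply connected near some $q\in\fr\om$. Then one introduces Lemma~\ref{lem:harmonic measure}: starting Brownian motion from a point of $\om$ near $q$ and running it until it exits $\om$ (within a compact level-set disk $D\subset W^s(p)$), the exit distribution charges $\fr\om\cap N$ with positive mass. Via balayage, this harmonic measure is shown to be mutually absolutely continuous with $\Delta(G^-\rest{W^s(p)})=T^-\rest{W^s(p)}$ restricted to $D$; the concrete computation uses that the Green function of $\Delta G^-$ on the sublevel domain $D=\set{G^-<g_0}$ is simply $G^- - g_0$. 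Hence $\fr\om$ carries positive $T^-\rest{W^s(p)}$-mass. This is then played against the other half of the argument: since $f$ is dissipative it is stably disconnected (\cite[Cor.~7.4]{bs6}), so for $\mu$-a.e.\ $x$ the measure $T^-\rest{W^s(x)}$ gives full mass to the \emph{point} components of $J^-\cap W^s(x)$; these point components lie in $J^*$, the transverse measure is holonomy-invariant, so by transport along the unstable lamination this full-mass property holds for \emph{every} $p\in J^*$. But $\fr\om$ is a non-degenerate continuum, so it cannot carry positive mass — contradiction. In other words, the roles are reversed relative to your proposal: stable disconnectedness is used not as a source of compact components to hunt near $\tilde q$, but as the quantitative constraint (full mass on points) that makes local simple connectivity impossible.

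Two further remarks. First, your ``second fact'' (a component $C$ of $\set{u=0}$ meeting $\fr\tilde\om$ must be contained in $\fr\tilde\om$) is not a consequence of the connectedness argument you give: $C\setminus\fr\tilde\om=C\setminus\overline{\tilde\om}$ is open in $C$ and $C\cap\fr\tilde\om$ is closed in $C$, but you have not shown $C\cap\fr\tilde\om$ is open in $C$, and in general it need not be (think of a cross-shaped zero set with $\tilde\om$ a single quadrant). Second, even granting step~(A), your separation hypothesis in step~(B) — that $c$ can be chosen so that the $c$-sublevel component containing $C$ is relatively compact in $\tilde N$ — is nontrivial when components of $\set{u=0}$ accumulate on $C$, which is exactly the regime one is in here. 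The paper's harmonic-measure argument sidesteps both of these issues, which is one of the reasons it is phrased by contradiction rather than as a direct construction of the loop.
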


\begin{proof} 
Since $f$ is dissipative by \cite[Cor. 7.4]{bs6} it is stably disconnected. 
It follows that almost every unstable component  of $K^+$
is a point (see \cite[Thm 7.1]{bs6} and also \cite[Thm 2.10]{connex}).  
More specifically, if $\mu$ is the unique measure of maximal entropy, then for $\mu$-a.e. $x$, the measure $T^-\rest{W^s(x)}$ (which is locally given by the wedge product 
$T^-\wedge[W^s(x)]$) gives full mass to the point components 
of  $J^-\cap W^s(x)$.  Obviously by Lemma \ref{lem:boundary}
every such point component belongs to $J^*$ so we can transport it to nearby stable manifolds by unstable 
holonomy. In addition, the measure $T^-\rest{W^s(x)}$ is holonomy invariant (see \cite[Thm 6.5]{bs1} or 
\cite[Thm 4.5]{bls}) so if $x$ is such that $T^-\wedge[W^s_\delta(x)]$ gives full mass to point components, then the 
same holds for nearby $x'$. Thus we conclude that this property holds for {\em every} $p\in J^*$: $T^-\rest{W^s(p)}$ gives full mass
to the point components of $J^-\cap W^s(p)$.

\begin{lem} \label{lem:harmonic measure}
Let $p\in J^*$ and 
$\om$ be a component of $W^s(p)\cap U^-$  
such that $\om$ is locally simply connected near some $q\in \fr\om$. 
Then $\fr \om$ has positive $(T^-\rest{W^s(p)})$-measure. 
\end{lem}

This proves Lemma \ref{lem:locally simply connected}. Indeed, 
 assuming that 
  $\om$ is locally simply connected near some  $q\in \fr\om$, Lemma \ref{lem:harmonic measure} asserts  that 
    $T^-\rest{W^s(p)}$ carries positive mass on a non-trivial 
 continuum so $f$ cannot be  stably disconnected. On the other hand   $f$ must be stably disconnected 
 because it is dissipative, 
 and we reach a contradiction.
 \end{proof}

The idea  of Lemma \ref{lem:harmonic measure} is as follows: 
every neighborhood of $q$ in $\fr\om$ has positive harmonic measure when viewed 
from $\om$.  But the harmonic measure viewed from $\om$ is absolutely continuous with 
respect to $T^-\wedge[W^s(p)]$, hence the result. 
The formalization of this argument requires some elementary potential theory, for which we refer the reader to Doob's classical monograph\footnote{Note that Doob works with superharmonic functions so all inequalities have to be reversed.} \cite{doob}. 

In particular we shall 
use the formalism of {\em sweeping} (or {\em balayage}). Let $D$ be a smoothly bounded domain
in $\cc$, $A$ a non-polar compact subset of $D$ and $\nu$ a positive measure on $D$. 
The swept measure $\rho_{\nu, D, A}$ 
of   $\nu$  on $A$    is the distribution on $A$  of the exit point of the Brownian motion in $D\setminus A$ 
 whose starting point is distributed according to 
    $\nu$. In particular its mass is lower than that of $\nu$ since a positive proportion of Brownian 
 paths escape from $\fr D$. If $G_{\nu, D}$ is the Green potential of $\nu$ in $D$, that is the unique negative subharmonic 
 function on $D$ such that $G_{\nu, D}\rest{\fr D} = 0$ and $\Delta G_{\nu, D} = \nu$, then the swept measure of $\nu$ on $A$ is 
 $\Delta R_{\nu, D, A}$, where 
 \begin{equation}\label{eq:reduction}
 R_{\nu, D, A}  (z)  = \sup\set{u(z) , \ u   \leq 0 \text{ subharmonic on } D \text{ and }u\leq  G_{\nu, D}\text{ on } A}
 \end{equation}
(see Sections 1.III.4, 1.X and 2.IX.14 in \cite{doob}). If  $\nu$ and $\nu'$ have their supports disjoint from $A$, then the 
corresponding swept measures are mutually absolutely continuous (as follows from instance from Theorem 1.X.2 
in \cite{doob}).

\begin{proof}
We first claim that we can  shift $q$ slightly so that the assumptions of the lemma hold  and in addition
$W^s_\bb(q')$ is of bounded vertical degree. Indeed $q$ belongs to $J^*$
and for $q'\in W^u_\delta (q)\cap J^*$, there is a component of $W^s_\delta(q')\setminus J^*$
 corresponding to $\om$ under unstable holonomy, which is locally simply connected near $q'$. 
 Since $G^-$ is continuous, if $q'$ is close enough to $q$, it takes positive values on that component, so we 
 infer that the property that $\om$ is a component of $U^-$ is open. Now by Lemma \ref{lem:non extremal},
  for $(T^+\rest{W^u_\delta})$-a.e. $q'$, $W^s_\bb(q')$  is a vertical manifold in $\bb$ of finite degree 
 which establishes our claim. Without loss of generality rename $q'$ into $q$. 
  For every $g_0 < \min_{\fr\bb} G^-$, the component of $\set{G^-<g_0}$ containing $q$ in $W^s(q)$ is relatively compact for the 
  intrinsic topology. 
  We fix such a $g_0$ which is not a critical value of $G^-$  and let $D$ be the corresponding component, which is a 
  smoothly bounded topological disk. 
   From now on we work exclusively in $D$.
 
 By assumption there is a   neighborhood $N$ of $q$ in $D$ and a component $U$ of $\set{G^->0}\cap N$ that is 
 simply connected.  We have to show that $\fr U\cap N$ has positive mass relative to $\Delta G^-$.  
 We choose $N$ to be closed so that $\fr U\cap N$ is compact.
 First, observe that for every $z_0 \in U\cap N$, the probability that 
 the  Brownian motion  issued from $z_0$ hits $\fr U\cap N$ before leaving 
$U$ is positive.  Therefore the swept measure $\rho_{\delta_{z_0}, D, \fr U\cap N}$ has positive mass and 
to prove the lemma it is enough to show that   is absolutely continuous with respect to   $\Delta G^-$. Recall that the measure class
of the swept measure does not depend on the starting point so we can replace $\delta_{z_0}$ by an arbitrary positive 
measure on $D \setminus (D\cap K^-)$. Let $0<g_1< g_0$ and $\mu_{g_1} := \Delta (\max(G^-, g_1))$ be the natural measure 
induced by $G^-$ on the level set $\set{G^-  = g_1}$. We choose $\mu_{g_1}$ for the initial distribution of Brownian motion.  Since $G^-\equiv g_0$ on $\fr D$, 
the Green function $G_{\Delta G^-, D}$ of  the restriction of 
$\Delta G^-$ to $D$  is equal to  $G^--g_0$, and likewise
$$G_{\mu_{g_1}, D} = \max(G^-, g_1) - g_0.$$ Thus from \eqref{eq:reduction} we get that 
\begin{align*} 
R_{\mu_{g_1} , D, K^-\cap D }&= \sup\set{u(z) , \ u \text{ s.h.} \leq 0 \text{ on } D \text{ and } u\leq G_{\mu_{g_1}, D} 
 \text{ on } K^-\cap D }\\
 &= \sup\set{u(z) , \ u \text{ s.h.} \leq 0 \text{ on } D \text{ and } u\leq  g_1-g_0 
 \text{ on } K^-\cap D }\\
  &=  \abs{g_1-g_0}\frac{G^--g_0}{g_0}
  \end{align*}
and finally $$\rho_{\mu_{g_1}, D, K^-\cap D} = \frac{g_0-g_1}{g_0} \Delta G^-.$$
The proof is complete.
\end{proof}

 \begin{step} \label{step4}
 Conclusion.
\end{step}

We just have to assemble the three previous steps. Assume as before by contradiction 
 that  $f$ is dissipative, uniformly hyperbolic on $J^*$ and $J^*\subsetneq J$. Then by Step \ref{step:disk} there exists $p\in J^*$ and
 a ``queer'' component $\om$ of $W^s(p)\setminus J^*$ along which $G^-\equiv 0$. Pick  $q\in \fr \om$. 
 By Lemma \ref{lem:boundary},
 $q\in J^*$ so we can follow $\om \cap W^s_\delta(q)$ using 
 the holonomy along local unstable manifolds. Then for $q'\in W^u_\loc(q)$ near $q$, the holonomy image 
 $h_{q,q'}(\om \cap W^s_\delta(q))$ is contained in a queer component of $W^s(q')\setminus J^*$, 
  which must be leafwise unbounded by Step \ref{step2}. 
 On the other hand by Lemma 
 \ref{lem:non extremal}, for generic $q'$ in   $W^u_\loc(q)$ (relative to the transverse measure $T^+\rest{W^u_\loc(q)}$) 
 $W^s_{\bb}(q')$ is of bounded 
 degree, in particular any component of $K^-\cap W^s_{\bb}(q')$ is leafwise bounded. This contradiction finishes the proof. \qed

 \section{Proof of the main theorem: the conservative case}\label{sec:proof conservative}
 
 Again the proof is by contradiction, so assume that $f$ is a conservative 
 polynomial automorphism of $\cd$ such that $J^*$ is a hyperbolic set and $J^*\subsetneq J$. 
 We will use  a perturbative argument and the  dissipative case of the theorem to reach a contradiction. 
 
 Assume that $f$ is written as a product of Hénon mappings $f = h_1\circ \cdots \circ h_k$ 
  and let $(f_\la)_{\la\in B}$ be a parameterization 
 of a neighborhood of $f$    in the space of such products, that is the space of coefficients of the 
 $h_i$, and such that $f_0 = f$. We can assume that $B$ is a ball in $\cc^N$ for some $N$.  
  Since $\la\mapsto \mathrm{Jac}(f_\la)$ is an open map, there 
 exist parameters arbitrary close to  0 for which $f_\la$ is dissipative. As already said, by \cite[Thm 1.4]{bhi} there 
 is no persistent relation between multipliers of periodic orbits so the family is substantial
 in the sense of \cite{tangencies}. 
 
 Since $f_0$ is hyperbolic on $J^*(f_0)$ the family $(f_\la)$ is $J^*$-stable in a neighborhood of the origin, that is, 
 $J^*(f_\la)$ moves under a 
 holomorphic motion. Reducing the parameter space we can assume that $J^*$ is hyperbolic throughout $B$.
 Pick a point $p = p(0) \in J(f_0)\setminus J^*(f_0)$. 
 It was shown in \cite[Thm 5.12]{tangencies} that in a (weakly) $J^*$-stable family, 
 the  motion of $J^*$ extends to a branched 
 holomorphic motion of  $K$. Thus there exists a holomorphic 
 continuation $p(\la)$ of $p(0)$ such that for every $\la\in B$, $p(\la)$ belongs to $K(f_\la)$. Furthermore  
  for every 
 $\la\in B$, $p(\la)$ is disjoint from $J^*(f_\la)$. Indeed if for  some $\la_0\in B$ we had  $p(\la_0) 
 \in J^*(f_{\la_0})$, then by \cite[Lem 4.10]{tangencies}
 $p(\la)$ would have to coincide  throughout the family $(f_\la)$ 
 with the 
 natural continuation of $p(\la_0)$ as a point of the hyperbolic set  $J^*$, which is not the case since $p(0)\notin J^*(f_0)$.
 
 Let now $\la_1\in B$ be such that $f_{\la_1}$ is dissipative. Then  
  by the first part of the proof
 $J(f_{\la_1}) = J^*(f_{\la_1})$, and  $K(f_{\la_1})\setminus J(f_{\la_1})$ is non-empty since it contains $p(\la_1)$. For a 
 dissipative hyperbolic map $$K\setminus J=(K^+\cap J^-)\setminus (J^+\cap J^-) = \mathrm{Int}(K^+)\cap J^-,$$ so we 
 deduce that $\mathrm{Int}(K^+(f_{\la_1}))$ is non-empty.  By \cite{bs1},  $\mathrm{Int}(K^+(f_{\la_1}))$ is a 
 finite  union of 
 attracting basins of periodic sinks, therefore $f_{\la_1}$ admits an attracting periodic point. 
 On the other hand by \cite[Thm 4.2]{tangencies}, periodic points stay of constant type in a $J^*$-stable family (this 
 holds even in the presence of conservative maps, provided the family is substantial), so $f_0$
must have an attracting orbit, which is contradictory since it is conservative.  The proof is complete. \qed

 \end{document}